\documentclass{article}
\usepackage{amsmath}
\usepackage{amssymb}

\newcommand{\C}{\mathbb{C}}

\newcommand{\N}{\mathbb{N}}
\newcommand{\TT}{\mathbb{T}}

\def\co{\colon}
\def\la{\langle}
\def\ra{\rangle}
\def\i{\infty}

\def\op{\oplus}

\def\HH{{\cal H}}
\def\L{{\cal L}}

\def\G{{\cal G}}
\def\NN{{\cal N}}
\def\E{{\cal E}}
\def\M{{\cal M}}
\def\F{{\cal F}}

\def\X{{\cal X}}

\def\s{\sigma}

\def\D{\Delta}
\def\l{\lambda}
\def\m{\mu}

\def\sp{\sigma_{\rm p}}

\def\r{\rho}

\def\x{\xi}

\def\1{\mathbf 1}

\def\wt{\widetilde}

\def\wh{\widehat}

\def\ot{\otimes}

\def\G{\Gamma}

\newtheorem{pro}{Proposition}
\newtheorem{thm}[pro]{Theorem}
\newtheorem{lem}[pro]{Lemma}

\newtheorem{que}[pro]{Question}

\def\rank{\hbox{\rm rank\,}}
\def\Lat{\hbox{\rm Lat\,}}

\begin{document}

\title{Adjacent cross-sections of the commutant of Hilbert space operators}

\author{L\'aszl\'o K\'erchy}

\date{}

\maketitle

\begin{abstract}
\noindent
Applying the techniques resulting the existence of almost invariant half-spaces, similarity models $\wh T$ can be given for upper triangular operator-matrices 
$T= \left[\begin{matrix}A&C\\
0&B\end{matrix}\right]$. 
The model $\wh T$ is also an operator-matrix, containing two diagonal operators in the general case \cite{ker25}, and the unilateral shift $S$ together with a diagonal operator in the particular case when $A$ is similar to $S$ \cite{ker26}.
Well-chosen compressions of operators in the commutant $\{\wh T\}'$ form a linear manifold $\wh\L$ satisfying the condition that every $\wh X\in\wh \L$ is transformed into $\wh Y$ with $\rank\wh Y\le 2$ by a canonical mapping.
Furthermore, a cyclcity property of $\{T\}'$ yields transitivity of $\wh\L$.
In \cite{ker25} and \cite{ker26} the 3-dimensional cross-sections of $\wh\L$ have been investigated characterizing the canonical bases occurring in the corresponding subspaces of the matrix-algebra $M_3[\C]$.
In this paper new conditions are provided by studying matching of adjacent cross-sections.

\noindent
\emph{AMS Subject Classification} (2020): 47A08, 47A15, 47A20, 47L05.

\noindent
\emph{Key words}: invariant subspace, hyperinvariant subspace, compression of operator, unilateral shift, transitive subspace of matrices, singular matrices.
\end{abstract}

\section{Introduction}
\label{introduction}

Let $\HH_1$ and $\HH_2$ be complex Hilbert spaces with $\dim\HH_1= \dim\HH_2=\aleph_0$ and let us consider a (bounded, linear) operator
\[T= \left[\begin{matrix} A& C\\
0&B\end{matrix}\right]\in\L(\HH_1\op\HH_2).\]
It is clear that the subspace $\HH_1(\equiv\HH_1\op\{0\})$ is invariant for $T : \, T\HH_1\subset\HH_1$.
The commutant $\{T\}'$ of $T$ consists of those operators $Q\in\L(\HH_1\op\HH_2)$ which commute with $T :\, QT=TQ$.
It will be assumed that $T$ is not scalar multiple of the identity.
We are interested in the following basic question:

\begin{que}
\label{basic}
Does $T$ have a nontrivial hyperinvariant subspace, that is a subspace $\M\in \hbox{\rm Lat}(\HH_1\op\HH_2)$, different from $\{0\}\op\{0\}$ and $\HH_1\op\HH_2$, which is invariant for every $Q\in\{T\}' :\, Q\M\subset\M$?
\end{que}

The hyperinvariant subspaces of $T$ form the complete lattice $\hbox{Hlat\,}T$.
So the question above asks whether $\hbox{Hlat\;}T\ne \left\{\{0\}\op\{0\}, \HH_1\op\HH_2\right\}$.
Notice that an affirmative answer for Question\,\ref{basic} yields that every nonscalar operator $A$ has a proper hyperinvariant subspace.
Indeed, take $T= A\op A$ and observe that $\hbox{Hlat\,}T= \left\{\NN\op\NN: \NN\in \hbox{Hlat\,}A\right\}$.
In that way we would obtain positive answers for the hyperinvariant subspace problem (HSP), and in particular for the invariant subspace problem (ISP), which are arguably the most challenging open questions in operator theory.

A recent approach to (ISP) in an arbitrary (infinite dimensional) Banach space $\X$ was initiated in \cite{APTT}, and culminated in \cite{Tc} proving that every operator $T$ on $\X$ has an \emph{almost invariant half-space}. 
We recall that $\M$ is a half-space in $\X$ if $\M$ and $\X/\M$ are infinite dimensional.
Furthermore, $\M$ is almost invariant for $T$ if $\M$ is invariant for a finite rank perturbation of $T$.
The technique of this approach was refined in the Hilbert space setting in \cite{JKP}, which led to a structure theorem in the paper \cite{HP}.

Applying this method to our block-triangular operator
$T\in\L(\HH_1\op\HH_2)$ more delicate similarity model $\wh T\in\L(\HH^{(4)})$ has been constructed for a translate of $T$ in \cite{ker25}.
Here $\HH$ is an auxiliary complex Hilbert space with $\dim\HH=\aleph_0$ and $\HH^{(4)}= \HH\op\HH\op\HH\op\HH$.
The model $\wh T$ contains two diagonal components: $D, D_*$.
The central linear manifold $\wh\L\subset\L(\HH)$ consists of the entries of the operators in $\{\wh T\}'$ belonging to $\L(\hbox{dom\,}D, \hbox{dom\,}D_*)$.
The crucial property of $\wh\L$ is that $\rank(\wh XD-D_*\wh X)\le 2$ holds, for every $\wh X\in\wh\L$.
Furthermore, a cyclicity condition of $\{T\}'$, which can be assumed in the quest for proper hyperinvariant subspaces, implies that $\wh\L$ is transitive.
An orthonormal basis $\{e_k\}_{k=1}^\i$ is fixed in $\HH$.
For every $r\in\N$, we consider the subspace $\E_{3,r}= \vee\{e_r,e_{r+1}, e_{r+2}\}$.
The cross-section $\wh\L_r$ consists of the compressions of the operators in $\wh\L$ to $\E_{3,r}$.
This subspace of $\L(\E_{3,r})$ inherits the rank-2 property and the transitivity of $\wh\L$.
The algebra-isomorphism $M_{3,r}\co \L(\E_{3,r}) \to M_3[\C]$ assigns to each operator its matrix in the orthonormal basis $(e_r, e_{r+1}, e_{r+2})$.
The main result in \cite{ker25} characterizes the arising subspaces $M_{3,r}(\wh\L_r)$ in terms of properties of a canonical basis.

In \cite{ker26} the case when $A$  is similar to the unilateral shift $S$ was investigated.
Now the similarity model $\wh T\in\L(\HH^{(3)})$ of a translate of $T$ contains $S$ and a diagonal operator $D_*$.
The linear manifold $\wh\L\subset\L(\HH)$ consists of those entries of the operators in $\{\wh T\}'$ which belong to $\L(\hbox{dom\,}S, \hbox{dom\,}D_*)$.
For every $\wh X\in\wh\L$, we have $\rank(\wh XS-D\wh X)\le1$, where $D$ is a translate of $D_*$.
Furthermore, a cyclicity condition of $\{T\}'$ implies that $\wh\L$ is transitive.
For every $r\in\N$, let $\wh\L_r$ be again the compression of $\wh\L$ to the subspace $\E_{3,r}= \vee\{e_r,e_{r+1}, e_{r+2}\}$.
The cross-section $\wh\L_r$ inherits the transitivity of $\wh\L$, and $\rank\G_r(X)\le2$ is true for every $X\in\wh\L_r$ with a canonical mapping $\G_r\in \L(\L(\E_{3,r}))$.
The main result of \cite{ker26} provides complete characterization of the arising subspaces $M_{3,r}(\wh\L_r)$ in terms of conditions concerning the elements of a canonical basis.

Our aim in this paper is to give further conditions for these bases by studying matching of adjacent cross-sections.

In Section\,2 compressions of operators are studied.
In Section\,3 we examine the general case, while in Sections\,4 and 5 we discuss the case when $A$ is similar to $S$.
The conditions obtained simplify the featured canonical bases in the compressions, and establish connections between the parameters in the neighboring cross-sections.
If any of the resulting conditions fails, then $T$ has a nontrivial hyperinvariant subspace; more precisely, there exists a nonzero vector $x\in\HH_1$ such that $\{T\}'x$ is not dense in $\HH_1\op\HH_2$.
This is because the rules concerning ranks cannot be broken.

\section{Compressions}
\label{compression}

Let $\{e_k\}_{k=1}^\i$ be an orthonormal basis in the Hilbert space $\HH$.
For any subspace $\E\in \Lat\HH$, the \emph{compression mapping}
\[C_{\HH,\E}\co \L(\HH)\to\L(\E),\; X\mapsto P_\E X\big|_\E\]
is a bounded linear transformation.
(Here $P_\E\in\L(\HH)$ stands for the orthogonal projection onto $\E$.)
Given any $\F\in\Lat\E$, we may consider the mapping
\[C_{\E,\F}\co \L(\E) \to \L(\F), \; X\mapsto P_\F X\big|_\F.\]
The following simple statement establishes connection among these transformations.

\begin{lem}
\label{composition}
We have $\; C_{\E,\F} \circ C_{\HH,\E} = C_{\HH,\F}$.
\end{lem}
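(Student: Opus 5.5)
The plan is to verify the identity pointwise: fix an arbitrary $X\in\L(\HH)$ and compare the two operators in $\L(\F)$ on an arbitrary vector $f\in\F$. Unwinding the definitions gives
\[\bigl(C_{\E,\F}\circ C_{\HH,\E}\bigr)(X)f = P^{\E}_{\F}\bigl(P_\E X f\bigr),\]
where $P^{\E}_{\F}\in\L(\E)$ is the orthogonal projection of $\E$ onto $\F$. Here we use that $f\in\F\subset\E$, so the restriction $P_\E X|_\E$ may be evaluated at $f$. On the other side, $C_{\HH,\F}(X)f=P_\F Xf$, with $P_\F\in\L(\HH)$. So the whole statement reduces to the operator identity
\[P^{\E}_{\F}P_\E = P_\F \quad\text{on }\HH,\]
where $P^\E_\F$ is regarded as acting on $P_\E\HH=\E$.

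To prove this projection identity, take $h\in\HH$ and decompose it as $h=P_\F h+(h-P_\F h)$. Since $\F\subset\E$, we have $\E^\perp\subset\F^\perp$. Writing $h=P_\E h+(h-P_\E h)$, the vector $h-P_\E h$ lies in $\E^\perp\subset\F^\perp$. Hence $P_\E h-P_\F h=(h-P_\F h)-(h-P_\E h)$ is orthogonal to $\F$. Moreover $P_\E h-P_\F h\in\E$, because $P_\F h\in\F\subset\E$. Thus $P_\F h$ is the element of $\F$ with $P_\E h-P_\F h\perp\F$ inside $\E$, which means $P^\E_\F(P_\E h)=P_\F h$.

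Combining the two steps, for $h=Xf$ we get $\bigl(C_{\E,\F}\circ C_{\HH,\E}\bigr)(X)f=P_\F Xf=C_{\HH,\F}(X)f$. Since $f\in\F$ and $X\in\L(\HH)$ were arbitrary, the lemma follows. There is no real obstacle here. The only point needing care is the bookkeeping that distinguishes the projection $P_\F$ taken in $\E$ from the one taken in $\HH$, which is exactly what the projection identity settles.
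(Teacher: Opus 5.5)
Your proof is correct and follows the same pointwise route as the paper, which simply computes $Zx=P_\F Yx=P_\F P_\E Xx=P_\F Xx$ for $x\in\F$. The only difference is that you explicitly verify the identity $P^{\E}_{\F}P_\E=P_\F$ (using $\F\subset\E$). The paper uses this identity tacitly, identifying the projection of $\E$ onto $\F$ with $P_\F|_\E$.
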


\noindent
{\bf Proof.}
For completeness we sketch the easy proof.
Given any $X\in\L(\HH)$, let $Y= C_{\HH,\E}(X)\in\L(\E)$ and $Z= C_{\E,\F}(Y)\in\L(\F)$.
For every $x\in\F$ we have $Zx= P_\F Yx= P_\F P_\E Xx= P_\F Xx$, and so $Z= C_{\HH,\F}(X)$.

\rightline{$\square$}

We are interested in diagonal subspaces spanned by the basis vectors.
For any $r\in\N$, let
\[\E_{3,r}:= \vee\{e_r,e_{r+1}, e_{r+2}\} \quad \hbox{ and } \quad \F_{2,r}:= \vee\{e_r, e_{r+1}\}.\]
The \emph{matricial mappings}
\[M_{3,r}\co \L(\E_{3,r})\to M_3[\C] \quad \hbox{ and } \quad M_{2,r}\co \L(\F_{2,r})\to M_2[\C]\]
assign to each operator its matrix with respect to the given orthonormal basis.
These are algebra-isomorphisms.
For any $k,l\in\N_3(=\{1,2,3\})$, we consider the \emph{partial mapping}
\[P_{k,l}\co M_3[\C] \to M_2[\C], \quad X=[\x_{i,j}]_3\mapsto X_{k,l},\]
where $X_{k,l}$ is obtained from $X$ by deleting its $k$-th row and $l$-th column.
The following statement will play central role in the sequel.

\begin{lem}
\label{partial}
For every $r\in\N$, we have
\[P_{1,1}\circ M_{3,r}\circ C_{\HH,\E_{3,r}}= M_{2,r+1}\circ C_{\HH,\F_{2,r+1}}= P_{3,3}\circ M_{3,r+1}\circ C_{\HH,\E_{3,r+1}}.\]
\end{lem}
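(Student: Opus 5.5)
The plan is to factor both compositions through the intermediate compression onto $\E_{3,r}$ (respectively $\E_{3,r+1}$) using Lemma~\ref{composition}, and then compare matrix entries directly. First observe that $\F_{2,r+1}=\vee\{e_{r+1},e_{r+2}\}$ is a subspace of both $\E_{3,r}=\vee\{e_r,e_{r+1},e_{r+2}\}$ and $\E_{3,r+1}=\vee\{e_{r+1},e_{r+2},e_{r+3}\}$. By Lemma~\ref{composition}, for every $X\in\L(\HH)$,
\[C_{\HH,\F_{2,r+1}}(X)=C_{\E_{3,r},\F_{2,r+1}}\bigl(C_{\HH,\E_{3,r}}(X)\bigr)=C_{\E_{3,r+1},\F_{2,r+1}}\bigl(C_{\HH,\E_{3,r+1}}(X)\bigr).\]
It therefore suffices to prove the two purely finite-dimensional identities
\[P_{1,1}\circ M_{3,r}=M_{2,r+1}\circ C_{\E_{3,r},\F_{2,r+1}} \quad\hbox{on } \L(\E_{3,r}),\qquad P_{3,3}\circ M_{3,r+1}=M_{2,r+1}\circ C_{\E_{3,r+1},\F_{2,r+1}} \quad\hbox{on } \L(\E_{3,r+1}).\]

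Each of these is a direct entry check. For $Y\in\L(\E_{3,r})$, the $(i,j)$ entry of $M_{3,r}(Y)$ is $\la Ye_{r+j-1},e_{r+i-1}\ra$. Deleting the first row and first column leaves the entries with $i,j\in\{2,3\}$, namely $\la Ye_{r+j'},e_{r+i'}\ra$ for $i',j'\in\{1,2\}$. On the other hand, the $(i',j')$ entry of $M_{2,r+1}(P_{\F_{2,r+1}}Y|_{\F_{2,r+1}})$ is $\la P_{\F_{2,r+1}}Ye_{r+j'},e_{r+i'}\ra=\la Ye_{r+j'},e_{r+i'}\ra$, because $e_{r+i'}\in\F_{2,r+1}$ and $P_{\F_{2,r+1}}$ is self-adjoint. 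The second identity is verified the same way: deleting the third row and third column of $M_{3,r+1}(Y)$ leaves the entries $\la Ye_{r+j'},e_{r+i'}\ra$ with $i',j'\in\{1,2\}$ in the basis $(e_{r+1},e_{r+2})$.

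Nothing here is a real obstacle. The only point needing care is the index bookkeeping: which ordered basis each matrix mapping uses, and checking that deleting the first (respectively third) row and column corresponds exactly to discarding $e_r$ (respectively $e_{r+3}$). That comes down to the ordering $(e_r,e_{r+1},e_{r+2})$ fixed in the definition of $M_{3,r}$.
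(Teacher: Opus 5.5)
Your proof is correct and is essentially the paper's argument. Both reduce to checking that each surviving $2\times 2$ block consists of the entries $\la Xe_{r+j},e_{r+i}\ra$, $i,j\in\{1,2\}$, using only that the relevant basis vectors lie in the subspaces and that the projections are self-adjoint. The only difference is organizational: you first invoke Lemma~\ref{composition} to reduce to finite-dimensional identities on $\L(\E_{3,r})$ and $\L(\E_{3,r+1})$, whereas the paper compares all three matrices directly with the entries $\x_{r+i,r+j}$ of the infinite matrix of $\wt X$ and never uses that lemma.
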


\noindent
{\bf Proof.}
Given any $\wt X\in\L(\HH)$, let us consider its matrix $[\wt X]= [\x_{i,j}]_\i$, where
\[\x_{i,j}= \la e_i,\wt X e_j\ra \quad (i,j\in\N).\]
For any $r\in\N$, let $\wt X_r:= C_{\HH,\E_{3,r}}(\wt X)$ and $X_r= [\x_{i,j}^r]_3= M_{3,r}(\wt X_r)$.
Since
\[\x_{i,j}^r = \la e_{r+i-1}, \wt X_r e_{r+j-1}\ra = \la e_{r+i-1}, \wt X e_{r+j-1}\ra = \x_{r+i-1,r+j-1} \quad (i,j\in \N_3),\]
it follows that
\[P_{1,1}(X_r)= \left[\begin{matrix}
\x_{2,2}^r & \x_{2,3}^r\\
\x_{3,2}^r & \x_{3,3}^r\end{matrix}\right] = \left[\begin{matrix}
\x_{r+1,r+1}& \x_{r+1,r+2}\\
\x_{r+2,r+1}&\x_{r+2,r+2}\end{matrix}\right] =: X_0.\]
On the other hand, let $\wt Y_{r+1} := C_{\HH,\F_{2,r+1}}(\wt X)$ and $Y_{r+1} = [\eta_{i,j}^{r+1}]_2= M_{2,r+1}(\wt Y_{r+1})$.
Since $\eta_{i,j}^{r+1}= \x_{r+i,r+j}\; (i,j\in\N_2)$, it follows that $Y_{r+1}=X_0$.

Finally, take $\wt X_{r+1}= C_{\HH,\E_{3,r+1}}(\wt X)$ and $X_{r+1}= [\x_{i,j}^{r+1}]_3 = M_{3, r+1}(\wt X_{r+1})$, where $\x_{i,j}^{r+1}= \x_{r+i,r+j}\; (i,j\in\N_3)$.
Then
\[P_{3,3}(X_{r+1})= \left[\begin{matrix}
\x_{1,1}^{r+1} & \x_{1,2}^{r+1}\\
\x_{2,1}^{r+1}& \x_{2,2}^{r+1}\end{matrix}\right] = \left[\begin{matrix}
\x_{r+1,r+1}& \x_{r+1,r+2}\\
\x_{r+2,r+1}& \x_{r+2,r+2}\end{matrix}\right]= X_0,\]
and the proof is completed.

\rightline{$\square$}

\section{General operators}
\label{general}

Let us consider an arbitrary nonscalar operator of the form
\[T= \left[\begin{matrix}
A & C\\
0&B\end{matrix}\right] \in \L(\HH_1\op\HH_2).\]
In view of (HSP) we may assume that $T$ and its adjoint $T^*$ don't have eigenvalues: $\sp(T)=\sp(T^*)=\emptyset$.
Then, clearly $\sp(A)=\sp(B^*)=\emptyset$ hold too.
Let us select points on the boundaries of the spectra: $\l_0\in\partial\s(A)$ and $\m_0\in\partial\s(B)$.
These points will be approximated from the resolvent sets $\r(A)= \C\setminus \s(A)$ and $\r(B)= \C\setminus\s(B)$, respectively.
Let $\{e_k\}_{k=1}^\i$ be a fixed orthonormal basis in an auxiliary Hilbert space $\HH$.
It can be shown that $T-\m_0I$ is similar to an operator $\wh T\in\L(\HH^{(4)})$ of the form
\[\wh T= \left[\begin{matrix}
D&*&*&*\\
F&*&*&*\\
0&0&*&*\\
0&0&F_*&D_*\end{matrix}\right],\]
where
\[D=\sum_{k=1}^\i (\l_k-\m_0) e_k\ot e_k \; \hbox{ and }\; \r(A)\ni \l_k\to \l_0\; \hbox{ sufficiently  fast},\]
\[D_*= \sum_{k=1}^\i (\m_k-\m_0)e_k\ot e_k \; \hbox{ and } \; \r(B)\ni \m_k\to\m_0 \; \hbox{ sufficiently fast};\]
moreover $\{\l_k\}_{k=1}^\i\cup\{\m_k\}_{k=1}^\i$ consists of distinct nonzero points, and $\rank F = \rank F_*=1.$
It is clear that $\hbox{Hlat\,}T= \hbox{Hlat}(T-\m_0I) \simeq \hbox{Hlat\,}\wh T$.

Let us consider the linear manifold
\[\wh\L= \left\{ Q_{4,1}: [Q_{i,j}]_4\in\{\wh T\}'\right\}\]
in $\L(\HH)$.
The following rank-2 condition of $\wh\L$ plays significant role in our study:
\[\hbox{\rm rank}(D_*\wh X-\wh XD)\le 2 \quad \hbox{ holds for every }\; \wh X\in\wh\L.\]
In view of (HSP) we can make also the following assumption:

\medskip
\noindent
\textup{(H)} $\quad\quad \{T\}'x$ is dense in $\HH_1\op\HH_2$, for every nonzero $x\in\HH_1$.

\medskip
\noindent
(Notice that this situation occurs when the bilateral shift acts on the function space $L^2(\TT)= H^2 \op H^2_-$.)
The assumption (H) readily implies that $\wh\L$ is transitive.

For brevity we introduce the notation:
\[b_i= \m_i-\m_0, \; a_j=\l_j-\m_0, \; \wh c_{i,j}= b_i-a_j\; (i,j\in\N); \; \hbox{ and }\; \wh C= [\wh c_{i,j}]_\i.\]
For any $r\in\N$, let $\E_{3,r}$ be as before, and let us consider the cross-section
\[\wh\L_r= C_{\HH,\E_{3,r}}(\wh\L) \in \Lat \L(\E_{3,r}).\]
It is immediate that $\wh\L_r$ is also transitive.
Furthermore the Schur-product $C^r\circ X$ is a singular matrix, for every $X\in M_{3,r}(\wh\L_r)$.
Here $C^r= [c^r_{i,j}]_3$ with $c^r_{i,j}= \wh c_{r+i-1,r+j-1}\; (i,j\in\N_3)$.

In order to provide characterization of these cross-sections we need additional concepts.
Let $C=[c_{i,j}]_3\in M_3[\C_0]$ be arbitrary, where $\C_0= \C\setminus\{0\}$.
We recall that $C_{k,l}= P_{k,l}(C)\; (k,l\in\N_3)$.
For any $Y=[\eta_{i,j}]_2\in M_2[\C_0]$, let
\[\r(Y):= \frac{\eta_{1,1}\eta_{2,2}}{\eta_{1,2}\eta_{2,1}}.\]
We shall say that a subspace $\L\in \Lat M_3[\C]$ is \emph{$C$-normal}, if $\dim\L=5$ and there exists a basis $(L_1,L_2,Q_1,Q_2,Q_3)$ in $\L$ of the following form:
\[L_1=\left[\begin{matrix}
1&0&0\\
\r(C_{3,2})q_2 &0&0\\
\r(C_{2,2})q_3&0&0\end{matrix}\right], \quad L_2= \left[\begin{matrix}
0&1&0\\
0&\r(C_{3,1})q_2&0\\
0&\r(C_{2,1})q_3&0\end{matrix}\right],\]
\[Q_1=\left[\begin{matrix}
0&0&1\\
0&0&q_2\\
0&0&q_3\end{matrix}\right], \;
Q_2= \left[\begin{matrix}
0&0&0\\
p_1&p_2&1\\
0&0&0\end{matrix}\right], \; Q_3=
\left[\begin{matrix}0&0&0\\
0&0&0\\
\r(C_{1,2})p_1& \r(C_{1,1})p_2&1\end{matrix}\right],\]
with parameters $p_1, p_2, q_2, q_3\in\C_0$.
The general element in $\L$ is of the form
\[X=\left[\begin{matrix}
z_1&z_2&w_1\\
z_1\r(C_{3,2})q_2+w_2p_1 & z_2\r(C_{3,1})q_2+w_2p_2&w_2\\
z_1\r(C_{2,2})q_3+w_3\r(C_{1,2})p_1& z_2\r(C_{2,1})q_3 +w_3\r(C_{1,1})p_2&w_3\end{matrix}\right],\]
where $(z_1,z_2,w_1,w_2,w_3)$ runs through $\C^5$.

We know from \cite{ker25} that \emph{the linear manifold $\wh\L$ is $\wh C$-normal}, which means that the subspace $M_{3,r}(\wh\L_r)$ is $C^r$-normal
(with parameters $p_{1,r}, p_{2,r}, q_{2,r}, q_{3,r}$), for every $r\in\N$.
Applying Lemma\,\ref{partial}, we may obtain connection between the parameters of adjacent cross-sections.

\begin{thm}
\label{connection}
For every $r\in\N$, we have
\begin{itemize}
\item[\textup{(i)}] $\; q_{3,r}= q_{2,r} q_{2,r+1} \r(C^{r+1}_{3,1}),$
\item[\textup{(ii)}] $\; p_{1,r+1}= p_{2,r}p_{2,r+1} \r(C^r_{1,1}).$
\end{itemize}
\end{thm}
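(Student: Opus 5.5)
The plan is to use Lemma~\ref{partial} to obtain an equality between two $3$-dimensional subspaces of $M_2[\C]$, and then compare their defining linear equations. By Lemma~\ref{partial}, applied to every $\wt X\in\wh\L$,
\[P_{1,1}\bigl(M_{3,r}(\wh\L_r)\bigr)= M_{2,r+1}\bigl(C_{\HH,\F_{2,r+1}}(\wh\L)\bigr)= P_{3,3}\bigl(M_{3,r+1}(\wh\L_{r+1})\bigr).\]
Both outer sets can be read off from the general element of a $C$-normal subspace, which is written out just before the theorem.

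\emph{Step 1: the set $P_{1,1}(M_{3,r}(\wh\L_r))$.} Deleting the first row and column of the general element, with the parameters $p_{i,r},q_{i,r}$ and the matrix $C^r$, gives the matrices
\[\left[\begin{matrix} z_2\alpha+w_2p_{2,r} & w_2\\ z_2\beta+w_3\gamma & w_3\end{matrix}\right],\qquad \alpha=\r(C^r_{3,1})q_{2,r},\ \beta=\r(C^r_{2,1})q_{3,r},\ \gamma=\r(C^r_{1,1})p_{2,r}.\]
Here $(z_2,w_2,w_3)$ runs through $\C^3$. Write a matrix as $\left[\begin{smallmatrix}a&b\\ c&d\end{smallmatrix}\right]$. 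Since $\alpha\ne0$, eliminating $z_2,w_2,w_3$ shows that this set is the hyperplane
\[c-\frac{\beta}{\alpha}a+\frac{\beta p_{2,r}}{\alpha}b-\gamma d=0 .\]

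\emph{Step 2: the set $P_{3,3}(M_{3,r+1}(\wh\L_{r+1}))$.} Deleting the third row and column, with the parameters for index $r+1$, gives $a=z_1$, $b=z_2$, $c=z_1\delta+w_2p_{1,r+1}$ and $d=z_2\varepsilon+w_2p_{2,r+1}$. Here
\[\delta=\r(C^{r+1}_{3,2})q_{2,r+1},\qquad \varepsilon=\r(C^{r+1}_{3,1})q_{2,r+1}.\]
Eliminating $w_2$ shows that this set is the hyperplane
\[p_{2,r+1}c-p_{2,r+1}\delta a+p_{1,r+1}\varepsilon b-p_{1,r+1}d=0 .\]

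\emph{Step 3: compare the two hyperplanes.} They coincide, so their coefficient vectors are proportional. Normalizing the coefficient of $c$ to $1$ in both, the coefficients of $d$ give $p_{1,r+1}/p_{2,r+1}=\gamma$, which is exactly (ii). The coefficients of $b$ give $\varepsilon\gamma=\beta p_{2,r}/\alpha$. Substituting the definitions and cancelling $p_{2,r}$ yields
\[q_{3,r}=q_{2,r}q_{2,r+1}\,\r(C^{r+1}_{3,1})\,\frac{\r(C^r_{1,1})\,\r(C^r_{3,1})}{\r(C^r_{2,1})}.\]
The coefficient of $a$ gives a further relation, which is not needed here.

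\emph{Step 4: the identity in $\r$.} The only remaining point, and the one I expect to need care, is the identity $\r(C_{1,1})\r(C_{3,1})=\r(C_{2,1})$ for any $C=[c_{i,j}]_3\in M_3[\C_0]$. It holds by direct computation:
\[\frac{c_{22}c_{33}}{c_{23}c_{32}}\cdot\frac{c_{12}c_{23}}{c_{13}c_{22}}=\frac{c_{12}c_{33}}{c_{13}c_{32}}.\]
Applying it with $C=C^r$ turns the last display into (i). Throughout, all parameters and all entries of $C^r$ are nonzero, which justifies every division.
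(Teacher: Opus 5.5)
Your proof is correct and follows essentially the paper's route: Lemma~\ref{partial} gives $P_{1,1}(M_{3,r}(\wh\L_r))=P_{3,3}(M_{3,r+1}(\wh\L_{r+1}))$, and comparing these two $3$-dimensional subspaces of $M_2[\C]$ yields three relations; your $a$- and $d$-coefficient relations are exactly the paper's (6) and (8). The only difference is in deriving (i): the paper uses (6) together with identities that mix $C^r$ and $C^{r+1}$, whereas you combine the $b$- and $d$-coefficients, so you need only the single-matrix identity $\r(C_{1,1})\r(C_{3,1})=\r(C_{2,1})$.
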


\noindent
{\bf Proof.}
In virtue of the general forms of the matrices in the subspaces $M_{3,r}(\wh\L_r)$ and $M_{3,r+1}(\wh\L_{r+1})$, we obtain by Lemma\,\ref{partial} that for every ($z_1,z_2,w_1,w_2,w_3)\in\C^5$ there exists corresponding $(\wt z_1, \wt z_2, \wt w_1, \wt w_2, \wt w_3)\in\C^5$ such that
\[\left[\begin{matrix}
z_2 \r(C_{3,1}^r)q_{2,r}+w_2 p_{2,r} & w_2\\
z_2 \r(C_{2,1}^r)q_{3,r} + w_3 \r(C_{1,1}^r)p_{2,r}& w_3\end{matrix}\right] = \quad\quad\quad\quad\quad\quad\quad\quad\]
\[\quad\quad\quad\quad\quad = \left[\begin{matrix}
\wt z_1 & \wt z_2\\
\wt z_1\r(C_{3,2}^{r+1}) q_{2,r+1}+ \wt w_2 p_{1,r+1}& \wt z_2 \r(C_{3,1}^{r+1}) q_{2,r+1} + \wt w_2 p_{2,r+1}\end{matrix}\right].\]
The entrywise equations are
\begin{itemize}
\item[\textup{(1)}] $\;\wt z_1 = z_2 \r(C_{3,1}^r)q_{2,r} + w_2 p_{2,r},$
\item[\textup{(2)}] $\; \wt z_2=w_2,$
\item[\textup{(3)}] $\;\wt z_1 \r(C_{3,2}^{r+1})q_{2,r+1} + \wt w_2 p_{1,r+1} = z_2 \r(C_{2,1}^r) q_{3,r} + w_3 \r(C_{1,1}^r) p_{2,r},$
\item[\textup{(4)}] $\; \wt z_2 \r(C_{3,1}^{r+1}) q_{2,r+1} + \wt w_2 p_{2,r+1} = w_3$.
\end{itemize}
Eliminating $\wt w_2$, we may obtain from (3) and (4):
\begin{itemize}
\item[\textup{(5)}] $\; \wt z_1\r(C_{3,2}^{r+1}) q_{2,r+1} p_{2,r+1} - \wt z_2 \r(C_{3,1}^{r+1}) q_{2,r+1} p_{1,r+1} $
\item[] $\quad\quad\quad
= z_2 \r(C_{2,1}^r) q_{3,r} p_{2,r+1} + w_3 \left(\r(C_{1,1}^r)p_{2,r} p_{2,r+1} - p_{1,r+1}\right)$.
\end{itemize}
Substituting (1) and (2) into (5) yields
\begin{eqnarray*}
& &z_2\left(\r(C_{3,1}^r) q_{2,r} \r(C_{3,2}^{r+1}) q_{2,r+1}p_{2,r+1} - \r(C_{2,1}^r) q_{3,r}p_{2,r+1}\right)\\
  & & \quad\quad+ w_2\left(p_{2,r}\r(C_{3,2}^{r+1}) q_{2,r+1}p_{2,r+1} - \r(C_{3,1}^{r+1}) q_{2,r+1}p_{1,r+1}\right) \\
 & & \quad\quad - w_3\left(\r(C_{1,1}^r) p_{2,r}p_{2,r+1} - p_{1,r+1}\right)\\
 & & \quad =0.
\end{eqnarray*}
Since $(z_2,w_2,w_3)\in \C^3$ can be freely chosen, the coefficients here must be zero, and so
\begin{itemize}
\item[\textup{(6)}] $\; q_{3,r}= q_{2,r} q_{2,r+1} \r(C_{3,1}^r) \r(C_{3,2}^{r+1}) \r(C_{2,1}^r)^{-1}$,
\item[\textup{(7)}] $\; p_{1,r+1}= p_{2,r}p_{2,r+1} \r(C_{3,2}^{r+1}) \r(C_{3,1}^{r+1})^{-1},$
\item[\textup{(8)}] $\; p_{1,r+1}= p_{2,r} p_{2,r+1} \r(C_{1,1}^r)$.
\end{itemize}
Direct computations show that
\[\r(C_{3,1}^r) \r(C_{3,2}^{r+1}) \r(C_{2,1}^r)^{-1}= \r(C_{3,1}^{r+1}), \quad \r(C_{3,2}^{r+1}) \r(C_{3,1}^{r+1})^{-1}= \r(C_{1,1}^r).\]
Thus, (6)--(8) imply that (i) and (ii) hold.

\rightline{$\square$}

\section{Operators containing a unilateral shift}
\label{shift}

Let us consider again a block-triangular operator
\[T=\left[\begin{matrix}
A&C\\
0&B\end{matrix}\right]\in\L(\HH_1\op\HH_2).\]
Let us fix an orthonormal basis $\{e_k\}_{k=1}^\i$ in the auxiliary Hilbert space $\HH$.
Let us assume that \emph{$A$ is similar to the unilateral shift}
\[S=\sum_{k=1}^\i e_{k+1}\ot e_k\in\L(\HH).\]
We assume also that $\sp(T^*)=\emptyset$, whence $\sp(B^*)=\emptyset$ follows.
Select any point $b_0\in\partial\s(B)$.
Then $T-b_0I$  will be similar to an operator $\wh T\in\L(\HH^{(3)})$ of the form
\[\wh T= \left[\begin{matrix}
S-b_0I&*&*\\
0&*&*\\
0&F_*&D_*\end{matrix}\right],\]
where $D_*= \sum_{k=1}^\i (b_k-b_0) e_k\ot e_k, \; \r(B)\ni b_k\to b_0$ sufficiently fast, $\{b_k\}_{k=1}^\i$ are distinct nonzero complex numbers, and $\rank F_*\le1$.
Let us consider the linear manifold
\[\wh\L= \left\{ Q_{3,1}: [Q_{i,j}]_3\in\{\wh T\}'\right\}\]
in $\L(\HH)$.
Now
\[\hbox{ rank}(D\wh X- \wh XS)\le 1\] holds for every $\wh X \in\wh\L$, where
\[D=D_*+b_0I= \sum_{k=1}^\i b_k e_k\ot e_k.\]
We assume also that the former cyclicity condition (H) is valid, which implies that $\wh\L$ is transitive.

For any $r\in\N$, let $\E_{3,r}$ be as before, and let us consider the transitive cross-section
\[\wh\L_r= C_{\HH,\E_{3,r}}(\wh\L) \in\Lat\L(\E_{3,r}).\]
Forming the diagonal matrix $\D_r= \hbox{diag}(b_r, b_{r+1}, b_{r+2})\in M_3[\C]$, we may deduce that
\[\hbox{\rm rank}(\D_rX-XJ_3)\le 2\]
is true, for every $X\in M_{3,r}(\wh\L_r)$.
Here $J_3$ stands for the Jordan-cell
\[J_3= \left[\begin{matrix}
0&0&0\\
1&0&0\\
0&1&0\end{matrix}\right].\]
(Notice that the subspace $\E_{3,r}$ is not invariant for $S$.)

Given distinct numbers $b_1,b_2,b_3\in\C_0$, let $\D= \hbox{diag}(b_1,b_2,b_3)\in M_3[\C]$.
We introduce also the notation
\[b_\D(1)= \frac{1}{b_3}-\frac{1}{b_2}, \quad b_\D(2)= \frac{1}{b_3}-\frac{1}{b_1}, \quad b_\D(3)= \frac{1}{b_2}-\frac{1}{b_1}.\]
We say that the subspace $\L\in\Lat M_3[\C]$ is \emph{$\D$-normal}, if $\dim\L=5$ and there exists a basis $(L_1,L_2,Q_1,Q_2,Q_3)$ in $\L$ of the form (T1) or (T2), where

\medskip
\noindent
(T1)
\[L_1=\left[\begin{matrix}
1&0&0\\
x&0&0\\
y&0&0\end{matrix}\right], \quad L_2=\left[\begin{matrix}
1&1&0\\
b_\D(3)x&x&0\\
b_\D(2)y&y&0\end{matrix}\right],\]
\[Q_1=\left[\begin{matrix}
0&0&1\\
0&0&0\\
0&0&0\end{matrix}\right], \; Q_2= \left[\begin{matrix}
0&0&0\\
b_\D(3)/b_2& b_\D(3)&1\\
0&0&0\end{matrix}\right], \; Q_3= \left[\begin{matrix}
0&0&0\\
0&0&0\\
b_\D(2)/b_3& b_\D(2)&1\end{matrix}\right],\]
\[\hbox{with parameters }\; x,y\in\C_0;\]
and

\medskip
\noindent
(T2) $\quad L_1, L_2$ as before,
\[Q_1=\left[\begin{matrix}
0&0&1\\
q'&q&0\\
q'y/x+b_\D(1)qy/x& qy/x&0\end{matrix}\right],\]
\[Q_2=\left[\begin{matrix}
0&0&0\\
b_\D(3)/b_2-q'/x& b_\D(3)-q/x&1\\
0&0&0\end{matrix}\right],\]
\[Q_3=\left[\begin{matrix}
0&0&0\\
0&0&0\\
b_\D(2)/b_3-b_\D(1)q/x-q'/x& b_\D(2)-q/x & 1\end{matrix}\right],\]
\[\hbox{with parameters }\; x,y,q\in\C_0\; \hbox{ and }\; q'\in\C.\]

By the results of \cite{ker26} we know that $\wh\L$ is \emph{$D$-normal}, which means that $M_{3,r}(\wh\L_r)$ is $\D_r$-normal (with parameters $x_r,y_r, q_r, q_r'$), for every $r\in\N$.
Examining matching of adjacent cross-sections we are going to show that the type (T1) is actually excluded.

\begin{thm}
\label{exclusion}
If $\wh\L$ is $D$-normal, then $M_{3,r}(\wh\L_r)$ is of type {\rm (T2)}, for every $r\in\N$.
\end{thm}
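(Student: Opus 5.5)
The plan is to exploit matching of adjacent cross-sections, exactly as in the proof of Theorem~\ref{connection}. By Lemma~\ref{partial}, for every $r\in\N$ the two $3$-dimensional subspaces $P_{1,1}(M_{3,r}(\wh\L_r))$ and $P_{3,3}(M_{3,r+1}(\wh\L_{r+1}))$ of $M_2[\C]$ coincide, since both equal $M_{2,r+1}(C_{\HH,\F_{2,r+1}}(\wh\L))$. I will argue by contradiction. Suppose that for some $r$ the subspace $M_{3,r}(\wh\L_r)$ is of type (T1), and write $\D=\D_r$, $\D'=\D_{r+1}$.

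First I write the general element $X=z_1L_1+z_2L_2+w_1Q_1+w_2Q_2+w_3Q_3$ in both types and compute the two compressions.

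For type (T1) at level $r$, the lower-right block is
\[P_{1,1}(X)=\left[\begin{matrix} z_2x_r+w_2b_\D(3) & w_2\\ z_2y_r+w_3b_\D(2) & w_3\end{matrix}\right].\]

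For level $r+1$, of either type, the upper-left block is
\[P_{3,3}(X)=\left[\begin{matrix} \wt z_1+\wt z_2 & \wt z_2\\ \ast & \ast\end{matrix}\right],\]
where the second row is linear in $\wt z_1,\wt z_2,\wt w_1,\wt w_2$. The $\wt w_1$-contribution to that row is $(q'_{r+1},q_{r+1})$ in type (T2), and it vanishes in type (T1).

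Next I equate the two subspaces entrywise, as in equations (1)--(4) of Theorem~\ref{connection}. The first rows give
\[\wt z_2=w_2, \qquad \wt z_1=z_2x_r+w_2\bigl(b_\D(3)-1\bigr).\]
Substituting these into the second-row equations, and eliminating the $\wt w$-variables, I obtain a linear identity in the free variables $(z_2,w_2,w_3)$. Requiring all of its coefficients to vanish yields polynomial relations among $x_r,y_r$, the parameters $x_{r+1},y_{r+1},q_{r+1},q'_{r+1}$, and the numbers $b_r,b_{r+1},b_{r+2},b_{r+3}$.

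The goal is to show that these relations force an equality of the form $1/b_i=1/b_j$ with $i\ne j$, or force one of the parameters $x,y,q$ to vanish. Either outcome contradicts the standing hypotheses that the $b_k$ are distinct and that $x,y,q\in\C_0$.

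The decisive structural feature is that in (T1) the entries $(2,3)$ and $(3,3)$ of $P_{1,1}(X)$ are the free coordinates $w_2,w_3$. Meanwhile the $(2,1)$ and $(2,2)$ entries are tied to the first column through $x_r,y_r$ with the fixed coefficients $b_\D(3),b_\D(2)$, with no extra freedom from a $q$-parameter. On the other side, the second row of $P_{3,3}$ must reproduce the combination $z_2y_r+w_3b_\D(2)$ together with $w_3$, using only the $\wt w$-directions of level $r+1$.

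Comparing the coefficient of $w_3$ with the coefficient of $z_2$ should produce an identity involving $b_\D(2)$ and the quantities $b_{\D'}(3)/b_{r+2}$ and $b_{\D'}(3)$. Since $b_{\D}(2)=\frac{1}{b_{r+2}}-\frac{1}{b_r}$ while $b_{\D'}(3)=\frac{1}{b_{r+2}}-\frac{1}{b_{r+1}}$, such an identity should collapse to $b_r=b_{r+1}$ or something similar.

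I will treat separately the two cases in which level $r+1$ is of type (T1) or of type (T2). In the (T2) case the additional parameters $q_{r+1},q'_{r+1}$ first have to be eliminated, using the $w_2$-coefficient equation.

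If matching with level $r+1$ alone does not produce a contradiction, I will also match level $r$ with level $r-1$ when $r\ge2$. In that matching level $r$ is on the $P_{3,3}$ side, where (T1) has no $w_1$-term in the second row. For $r=1$ only the forward matching is available, so the argument must ultimately rely on the comparison with level $r+1$.

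I expect the main obstacle to be this elimination step: keeping track of the mixed parameters and verifying that the resulting rational identities in the $b_k$ genuinely contradict distinctness rather than being identically satisfied. This is the analogue of the ``direct computations'' with $\r(C_{k,l})$ in Theorem~\ref{connection}. I will first try the cleanest coefficient, namely that of $w_3$, since it involves only the $b$'s and at most one parameter.
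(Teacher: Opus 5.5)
Your overall strategy is the paper's: assume (T1) at level $r$, match $P_{1,1}(M_{3,r}(\wh\L_r))$ with $P_{3,3}(M_{3,r+1}(\wh\L_{r+1}))$ via Lemma~\ref{partial}, and split according to the type at level $r+1$. When level $r+1$ is of type (T1), your elimination works. Only $\wt z_1,\wt z_2,\wt w_2$ enter the $P_{3,3}$ block. Subtracting $1/b_{r+2}$ times the $(2,2)$-equation from the $(2,1)$-equation removes $\wt w_2$ and leaves an identity in $(z_2,w_2,w_3)$ whose $w_3$-coefficient is $1/b_{r+2}-b_{\D_r}(2)=1/b_r\neq 0$. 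So the contradiction is this nonvanishing, not a collision $b_i=b_j$, and no backward matching with level $r-1$ is needed. Your relation $\wt z_1=z_2x_r+w_2(b_\D(3)-1)$ comes from the printed first row $(1,1,0)$ of $L_2$. The paper's proof uses a general element with $(1,1)$-entry $z_1$, i.e.\ first row $(0,1,0)$ as in the strongly normal form. This only changes the $z_2$- and $w_2$-coefficients and is harmless.

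The genuine gap is the case where level $r+1$ is of type (T2). There $Q_1$ has the entries $q'_{r+1},q_{r+1}$ inside the $P_{3,3}$ block, so both $\wt w_1$ and $\wt w_2$ occur in the second row. The two second-row equations then form a $2\times 2$ system in $(\wt w_1,\wt w_2)$ with determinant $b_{\D_{r+1}}(3)\,(q'_{r+1}-q_{r+1}/b_{r+2})$. If this is nonzero, $P_{3,3}(M_{3,r+1}(\wh\L_{r+1}))$ is all of $M_2[\C]$ and the system is solvable for every $(z_2,w_2,w_3)$. Eliminating the $\wt w$-variables then produces no identity at all, so there is no ``$w_2$-coefficient equation'' from which to eliminate $q_{r+1},q'_{r+1}$. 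The one-sided entrywise matching you borrow from Theorem~\ref{connection} worked there only because in the $C$-normal form $Q_1$ sits in the third column.

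What you must add is the reverse inclusion, i.e.\ a dimension count. $P_{1,1}(M_{3,r}(\wh\L_r))$ is $3$-dimensional, spanned by the three independent matrices obtained from $(z_2,w_2,w_3)=(1,0,0),(0,1,0),(0,0,1)$. On the other side, $P_{3,3}$ of a (T2) subspace is $3$-dimensional only if its $\wt w_1$- and $\wt w_2$-directions are linearly dependent; both have zero first row. Since $b_{\D_{r+1}}(3)\neq 0$, this forces $q'_{r+1}=q_{r+1}/b_{r+2}$. Only then does the $\wt w_2$-direction become a multiple of the $\wt w_1$-direction. The same combination of the second-row equations then kills $\wt w_1$ and $\wt w_2$ together and returns exactly the identity of the (T1) case, with $w_3$-coefficient $1/b_r\neq 0$. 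You did note at the outset that both subspaces are $3$-dimensional, but your elimination never uses this fact, and it is exactly the step that carries this case.
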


\noindent
{\bf Proof.}
Suppose that $M_{3,r}(\wh\L_r)$ is of type (T1), for some $r\in\N$.

\medskip
\noindent
\emph{Case 1.}
Let us assume first that $M_{3,r+1}(\wh\L_{r+1})$ is also of type (T1).
The general element in $M_{3,r}(\wh\L_r)$ is of the form
\[\left[\begin{matrix}
z_1&z_2&w_1\\
z_1x_r+z_2b_{\D_r}(3)x_r+w_2 b_{\D_r}(3)/b_{r+1}& z_2x_r+w_2 b_{\D_r}(3)&w_2\\
z_1y_r+z_2b_{\D_r}(2) y_r+ w_3 b_{\D_r}(2)/b_{r+2}& z_2y_r+w_3b_{\D_r}(2)& w_3\end{matrix}\right],\]
with $x_r, y_r\in\C_0$, and $(z_1,z_2,w_1,w_2,w_3)$ runing through $\C^5$.
Similarly, the general element in $M_{3, r+1}(\wh\L_{r+1})$ is of the form
\[\left[\begin{matrix}
\wt z_1& \wt z_2& \wt w_1\\
\wt z_1 x_{r+1} + \wt z_2 b_{\D_{r+1}}(3) x_{r+1} + \wt w_2 b_{\D_{r+1}}(3)/ b_{r+2} & \wt z_2 x_{r+1} + \wt w_2 b_{\D_{r+1}}(3) & \wt w_2\\
\wt z_1 y_{r+1} + \wt z_2 b_{\D_{r+1}}(2) y_{r+1} + \wt w_3 b_{\D_{r+1}}(2)/b_{r+3} & \wt z_2 y_{r+1} + \wt w_3 b_{\D_{r+1}}(2) & \wt w_3\end{matrix}\right],\]
with $x_{r+1}, y_{r+1}\in\C_0$, and $(\wt z_1, \wt z_2, \wt w_1, \wt w_2, \wt w_3)$ runing through $\C^5$.
In view of Lemma\,\ref{partial}, we have
\[P_{1,1}(M_{3,r}(\wh\L_r))= P_{3,3}(M_{3,r+1}(\wh\L_{r+1})).\]
Hence, for every $(z_2,w_2,w_3)\in\C^3$, there exists $(\wt z_1, \wt z_2, \wt w_2)\in\C^3$ such that
\[\left[\begin{matrix}
z_2x_r+w_2 b_{\D_r}(3) & w_2\\
z_2y_r +w_3 b_{\D_r}(2) & w_3\end{matrix}\right] =\hskip 6cm\]
\[\; 
\left[\begin{matrix}
\wt z_1 & \wt z_2\\
\wt z_1 x_{r+1} + \wt z_2 b_{\D_{r+1}}(3) x_{r+1}+ \wt w_2 b_{\D_{r+1}}(3)/b_{r+2} & \wt z_2 x_{r+1} + \wt w_2 b_{\D_{r+1}}(3)\end{matrix}\right].\]
The entrywise equations are
\begin{itemize}
\item[\textup{(1)}] $\; \wt z_1 = z_2 x_r + w_2 b_{\D_r}(3)$,
\item[\textup{(2)}] $\;\wt z_2=w_2$,
\item[\textup{(3)}] $\; \wt z_1 x_{r+1} + \wt z_2 b_{\D_{r+1}}(3) x_{r+1} + \wt w_2 b_{\D_{r+1}}(3)/b_{r+2}= z_2 y_r + w_3 b_{\D_r}(2)$,
\item[\textup{(4)}] $\;\wt z_2 x_{r+1} + \wt w_2 b_{\D_{r+1}}(3) = w_3.$
\end{itemize}
From (3) and (4) we may eliminate $\wt w_2$, obtaining
\begin{itemize}
\item[\textup{(5)}] $\; \wt z_1 b_{r+2}x_{r+1} + \wt z_2(b_{r+2}b_{\D_{r+1}}(3)-1)x_{r+1} = z_2 b_{r+2}y_r +w_3 (b_{r+2}b_{\D_r}(2)-1).$
\end{itemize}
Since $\D_r= \hbox{diag}(b_r,b_{r+1},b_{r+2})$ and $\D_{r+1}= \hbox{diag}(b_{r+1}, b_{r+2}, b_{r+3})$, it follows that
\[b_{r+2}b_{\D_{r+1}}(3)-1 = -\frac{b_{r+2}}{b_{r+1}} \quad \hbox{ and } \quad b_{r+2}b_{\D_r}(2)-1= - \frac{b_{r+2}}{b_r}.\]
Hence (5) takes the form\begin{itemize}
\item[\textup{($5'$)}] $\; \wt z_1 b_{r+2}x_{r+1} - \wt z_2 \frac{b_{r+2}}{b_{r+1}} x_{r+1} = z_2 b_{r+2} y_r - w_3 \frac{b_{r+2}}{b_r}.$
\end{itemize}
Simplifying by $b_{r+2}$ and substituting (1) and (2) into $(5')$ yields
\begin{itemize}
\item[\textup{(6)}] $\; (z_2 x_r + w_2 b_{\D_r}(3)) x_{r+1} - w_2 \frac{x_{r+1}}{b_{r+1}}= z_2 y_r - w_3 \frac{1}{b_r}$.
\end{itemize}
Multiplying (6) by $b_rb_{r+1}$, after rearrangement we obtain
\begin{itemize}
\item[\textup{($6'$)}] $\; z_2 b_rb_{r+1}(x_rx_{r+1}-y_r) - w_2 b_{r+1} x_{r+1} + w_3 b_{r+1}=0.$
\end{itemize}
Taking into account that $b_{r+1}x_{r+1}$ and $b_{r+1}$  are nonzero, we infer that
$(z_2, w_2,w_3)$ cannot be chosen arbitrarily in $\C^3$, what is a \emph{contradiction}.

\medskip
\noindent
\emph{Case 2.} Suppose now that $M_{3,r+1}(\wh\L_{r+1})$ is of type (T2).
The general element in $M_{3,r+1}(\wh\L_{r+1})$ is of the form
\[\left[\begin{matrix}
\wt z_1 & \wt z_2 & *\\
 t_{2,1} & t_{2,2}&*\\
*&*&*\end{matrix}\right],\]
where
\[t_{2,1}= \wt z_1 x_{r+1} + \wt z_2 b_{\D_{r+1}}(3) x_{r+1} + \wt w_1 q'_{r+1}+ \wt w_2\left(b_{\D_{r+1}}(3)/b_{r+2}-q'_{r+1}/x_{r+1}\right)\]
and
\[t_{2,2}=\wt z_2 x_{r+1}+ \wt w_1 q_{r+1} + \wt w_2\left(b_{\D_{r+1}}(3) -q_{r+1}/x_{r+1}\right),\]
with $x_{r+1}, q_{r+1}\in\C_0,\; q'_{r+1}\in\C$, and $(\wt z_1, \wt z_2, \wt w_1, \wt w_2, \wt w_3)$ running through $\C^5$.
In view of Lemma\,\ref{partial} we know that $\M_r= \wt\M_{r+1}\in\Lat M_2[\C]$, where
\[\M_r= \left\{ X(z_2,w_2,w_3): (z_2,w_2,w_3)\in\C^3\right\}\]
with
\[X(z_2,w_2,w_3)= \left[\begin{matrix}
z_2x_r + w_2 b_{\D_r}(3)& w_2\\
z_2 y_r + w_3 b_{\D_r}(2)& w_3\end{matrix}\right],\]
and
\[\wt\M_{r+1}= \left\{ \wt X(\wt z_1, \wt z_2, \wt w_1, \wt w_2): (\wt z_1, \wt z_2, \wt w_1, \wt w_2)\in\C^4\right\}\]
with
\[\wt X(\wt z_1, \wt z_2, \wt w_1, \wt w_2)=\left[\begin{matrix}
\wt z_1 & \wt z_2\\
t_{2,1}&t_{2,2} \end{matrix}\right].\]
The matrices
\[X(1,0,0)=\left[\begin{matrix}
x_r&0\\
y_r&0\end{matrix}\right], \; X(0,1,0)= \left[\begin{matrix}
b_{\D_r}(3)&1\\
0&0\end{matrix}\right], \; X(0,0,1)= \left[\begin{matrix}
0&0\\
b_{\D_r}(2)&1\end{matrix}\right]\]
are linearly independent in $M_2[\C]$, and so $\dim\M_r=3$.
On the other hand, we have
\[\wt X(1,0,0,0)= \left[\begin{matrix}
1&0\\
x_{r+1}& 0\end{matrix}\right], \quad \wt X(0,1,0,0)= \left[\begin{matrix}
0&1\\
b_{\D_{r+1}}(3) x_{r+1}& x_{r+1}\end{matrix}\right],\]
\[\wt X(0,0,1,0)= \left[\begin{matrix}
0&0\\
q'_{r+1}& q_{r+1}\end{matrix}\right],\]
\[\wt X(0,0,0,1)=
\left[\begin{matrix}
0&0\\
b_{\D_{r+1}}(3)/b_{r+2} - q'_{r+1}/x_{r+1} & b_{\D_{r+1}}(3)-q_{r+1}/x_{r+1}\end{matrix}\right].\]
It is clear that the subspace $\wt\M_{r+1}$ is of dimension 3 or 4.
Because of $\M_r= \wt\M_{r+1}, \; \dim\wt\M_{r+1}=3$ must hold, which happens if and only if 
\[(\wt X(0,0,1,0), \wt X(0,0,0,1))\]
 is linearly dependent.
Equivalently, we should have
\begin{eqnarray*}
0 &=& \det\left[\begin{matrix}
q'_{r+1}& q_{r+1}\\
b_{\D_{r+1}}(3)/b_{r+2} - q'_{r+1}/x_{r+1}& b_{\D_{r+1}}(3) - q_{r+1}/x_{r+1}\end{matrix}\right] \\
 &= & b_{\D_{r+1}}(3) \left(q'_{r+1}- \frac{q_{r+1}}{b_{r+2}}\right).
\end{eqnarray*}
Thus, $\dim\wt\M_{r+1}=3$ is valid exactly when
\[q'_{r+1}= \frac{q_{r+1}}{b_{r+2}}.\]
Let us assume now that $q'_{r+1}= q_{r+1}/b_{r+2}$ holds.
Then
\[\left(\wt X(1,0,0,0), \wt X(0,1,0,0), \wt X(0,0,1,0)\right)\]
forms a basis in $\wt\M_{r+1}$.
Hence the general element in $\wt\M_{r+1}$ is of the form
\[\wt X(\wt z_1, \wt z_2, \wt w_1,0)=
\left[\begin{matrix}
\wt z_1& \wt z_2\\
\wt z_1 x_{r+1} + \wt z_2 b_{\D_{r+1}}(3) x_{r+1}+ \wt w_1 q_{r+1}/ b_{r+2} & \wt z_2 x_{r+1} + \wt w_1 q_{r+1}\end{matrix}\right].\]
Given any $(z_2,w_2,w_3)\in\C^3$, there exists $(\wt z_1, \wt z_2, \wt w_1)\in\C^3$ such that
\begin{itemize}
\item[\textup{(1)}] $\; \wt z_1 = z_2 x_r+ w_2 b_{\D_r}(3)$,
\item[\textup{(2)}] $\;\wt z_2 = w_2$,
\item[\textup{(3)}] $\; \wt z_1 x_{r+1} + \wt z_2 b_{\D_{r+1}}(3) x_{r+1} + \wt w_1 \frac{q_{r+1}}{b_{r+2}}= z_2 y_r + w_3 b_{\D_r}(2)$,
\item[\textup{(4)}] $\; \wt z_2 x_{r+1} + \wt w_1 q_{r+1}= w_3$.
\end{itemize}
From (3) and (4) we may deduce
\begin{itemize}
\item[\textup{(5)}] $\; \wt z_1 b_r b_{r+1} x_{r+1} - \wt z_2 b_r x_{r+1} = z_2 b_r b_{r+1} y_r - w_3 b_{r+1}.$
\end{itemize}
Substituting (1) and (2) into (5) yields
\begin{itemize}
\item[\textup{(6)}] $\; z_2 b_rb_{r+1} (x_rx_{r+1}- y_r)- w_2 b_{r+1} x_{r+1} + w_3 b_{r+1} =0.$
\end{itemize}
Since $b_{r+1} x_{r+1}$ and $b_{r+1}$ are nonzero, the triplet $(z_2,w_2,w_3)$ cannot be arbitrarily chosen in $\C^3$, what is a \emph{contradiction}.

Summing up, the subspace $M_{3,r}(\wh\L_r)$ must be of type (T2), for every $r\in\N$.

\rightline{$\square$}

\section{Matching cross-sections of type (T2)}
\label{type2}

Investigating matching of neighboring $\D_r$-normal cross-sections of type (T2) we may deduce much simpler form of (T2) and we may derive connection between the corresponding parameters.

Given distinct numbers $b_1, b_2, b_3\in\C_0$, let $\D= \hbox{diag}(b_1,b_2,b_3)\in M_3[\C]$.
The subspace $\L\in\Lat M_3[\C]$ is called \emph{strongly $\D$-normal}, if $\dim\L=5$ and there exists a basis $(L_1,L_2,Q_1,Q_2,Q_3)$ in $\L$ of the form
\[L_1=\left[\begin{matrix}
1&0&0\\
x&0&0\\
y&0&0\end{matrix}\right], \quad L_2= \left[\begin{matrix}
0&1&0\\
b_\D(3)x&x&0\\
b_\D(2)y&y&0\end{matrix}\right],\]
\[Q_1= \left[\begin{matrix}
0&0&1\\
q/b_2& q&0\\
-y/(b_1b_3)& -y/b_1&0\end{matrix}\right],\]
\[ Q_2=\left[\begin{matrix}
0&0&0\\
1/b_2^2& 1/b_2&1\\
0&0&0\end{matrix}\right], \quad Q_3= \left[\begin{matrix}
0&0&0\\
0&0&0\\
1/b_3^2& 1/b_3&1\end{matrix}\right],\]
\[\hbox{with parameters }\; x,y,q\in\C_0.\]

Let $T, \wh T$ and $\wh\L$ be as in Section\,\ref{shift}.
Our main result in this setting is the following statement.

\begin{thm}
\label{main}
The linear manifold $\wh\L$ is strongly $D$-normal, which means that the subspace $M_{3,r}(\wh\L_r)$ is strongly $\D_r$-normal for every $r\in\N$, with parameters $x_r, y_r, q_r\in\C_0$.
Furthermore, we have
\[x_{r+1}= \frac{y_r}{x_r} \quad (r\in\N).\]
\end{thm}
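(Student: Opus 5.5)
By Theorem~\ref{exclusion}, every cross-section $M_{3,r}(\wh\L_r)$ is $\D_r$-normal of type (T2), with parameters $x_r,y_r,q_r\in\C_0$ and $q'_r\in\C$. The plan is to repeat the matching argument of Theorem~\ref{exclusion}, now with both adjacent cross-sections of type (T2). By Lemma~\ref{partial} we have
\[P_{1,1}(M_{3,r}(\wh\L_r))=P_{3,3}(M_{3,r+1}(\wh\L_{r+1})),\]
and I will compare the two sides as subspaces of $M_2[\C]$. The point is to show that this equality forces the special values of $q'_r$ and of the coefficients in $Q_1,Q_2,Q_3$ that define strong $\D_r$-normality. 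A change of basis in $\L$ (replacing $L_2$ by a suitable combination with $L_1$, and $Q_1$ by $Q_1$ minus multiples of $Q_2,Q_3$) should then bring the basis to the stated form.

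\emph{Step 1 (dimensions).} Write the general element of the (T2) cross-section at level $r$ and take the lower right $2\times2$ block (rows and columns $2,3$). This gives
\[\left[\begin{matrix}
z_2x_r+w_1q_r+w_2(b_{\D_r}(3)-q_r/x_r) & w_2\\
z_2y_r+w_1q_ry_r/x_r+w_3(b_{\D_r}(2)-q_r/x_r) & w_3\end{matrix}\right].\]
The $w_1$-generator is $(q_r/x_r)$ times the $z_2$-generator, so $\dim P_{1,1}(M_{3,r}(\wh\L_r))=3$. On the other side, exactly as in Case~2 of Theorem~\ref{exclusion}, $P_{3,3}(M_{3,r+1}(\wh\L_{r+1}))$ has dimension $3$ if and only if $q'_{r+1}=q_{r+1}/b_{r+2}$. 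Hence $q'_{s}=q_s/b_{s+1}$ for all $s\ge2$, which is the $(2,1)$ entry $q/b_2$ of $Q_1$ in strong normality.

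\emph{Step 2 (matching coefficients).} With this relation, $P_{3,3}$ at level $r+1$ is parametrized by $(\wt z_1,\wt z_2,\wt w_1)$. Its general element is the graph of an explicit linear map sending the first row $(\wt z_1,\wt z_2)$ and $\wt w_1$ to the second row. Equating it entrywise with the block above gives equations (1)--(4) as before: $\wt z_1=z_2x_r+w_1q_r+w_2(b_{\D_r}(3)-q_r/x_r)$, $\wt z_2=w_2$, and so on. Eliminating the tilded variables yields a linear identity in $(z_2,w_1,w_2,w_3)$ that must hold identically, so each coefficient vanishes. The $z_2$-coefficient should give $x_rx_{r+1}=y_r$, that is, $x_{r+1}=y_r/x_r$. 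The $w_2$- and $w_3$-coefficients should pin down $q_r/x_r$ so that
\[b_{\D_r}(3)-q_r/x_r=1/b_{r+1},\qquad b_{\D_r}(2)-q_r/x_r=1/b_{r+2}.\]
Since $b_{\D_r}(3)-1/b_{r+1}=b_{\D_r}(2)-1/b_{r+2}=-1/b_r$, these two conditions are consistent and both reduce to $q_r/x_r=1/b_r$. This is exactly what turns $Q_2,Q_3$ into the strongly normal form. Substituting back gives the third row of $Q_1$, namely $q_ry_r/x_r=y_r/b_r$, and the $(3,1)$ entry of $Q_1$ becomes a multiple of $y_r$. I expect these to produce the entries $-y/b_1$ and $-y/(b_1b_3)$ after adding suitable multiples of $Q_2,Q_3$ to $Q_1$. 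I will check the signs and the role of $b_{\D}(1)$ explicitly at this point.

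\emph{Main obstacle.} The relation $q'_s=q_s/b_{s+1}$ is obtained only for $s\ge2$, because it comes from the right-hand cross-section. For $r=1$ I would first try the dual comparison: use the $z_2,w_1$ consistency at level $r=1$ together with the fact that $P_{1,1}$ at level $1$ has dimension $3$, which already constrains $q_1,x_1,y_1$. If that does not suffice, I would fall back on the rank condition $\rank(\D_1X-XJ_3)\le2$ applied to $Q_1$ and to combinations $Q_1+tQ_2$ to force $q'_1=q_1/b_2$. A secondary difficulty is the bookkeeping of the basis change (the $(1,2)$ entry of $L_2$ and the exact entries of $Q_1$), which is routine but must agree with the definition of strong $\D$-normality.
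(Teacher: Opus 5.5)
Your Steps 1 and 2 follow the paper's argument. The dimension count forces $q'_{r+1}=q_{r+1}/b_{r+2}$, and the identity in $(z_2,w_2,w_3)$ gives $x_{r+1}=y_r/x_r$ and $1+b_rq_r/x_r=0$. There is a sign slip: $b_{\D_r}(3)-q_r/x_r=1/b_{r+1}$ means $q_r/x_r=-1/b_r$, not $1/b_r$. With the correct sign and $q'_r=q_r/b_{r+1}$, no change of basis is needed. Taking $L_2$ with $(1,1)$ entry $0$, as in the general elements the paper actually computes with, the (T2) matrices become exactly the strongly normal ones. For example, the third row of $Q_1$ is $(q_ry_r/(x_rb_{r+2}),\,q_ry_r/x_r,\,0)=(-y_r/(b_rb_{r+2}),\,-y_r/b_r,\,0)$.

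The genuine gap is the case $r=1$, which you rightly isolate. The paper's proof also uses $q'_r=q_r/b_{r+1}$ for all $r$ without comment, although matching only yields it for $r\ge 2$. Rebasing cannot avoid it: in both forms, the elements with vanishing first and third rows form the line $\C Q_2$. So $M_{3,1}(\wh\L_1)$ is strongly $\D_1$-normal only if $q'_1=q_1/b_2$. Neither of your remedies reaches this.
\begin{itemize}
\item The level-$1$ comparison only involves $P_{1,1}$, which deletes the first column, and that column is the only place $q'_1$ occurs.
\item The rank-$2$ condition is vacuous on (T2). For $\D=\mathrm{diag}(b_1,b_2,b_3)$ and a general (T2) element $X$, the rows $R_1,R_2,R_3$ of $\D X-XJ_3$ satisfy
\[R_2=\tfrac{b_2x}{b_1}R_1+b_2(xw_1-w_2)\,n,\qquad R_3=\tfrac{b_3y}{b_1}R_1+b_3(yw_1-w_3)\,n,\qquad n=\Bigl(\tfrac{q'-q/b_2}{x},\ \tfrac{1}{b_1}+\tfrac{q}{x},\ -1\Bigr).\]
Hence the rank is at most $2$ for every $q,q'$.
\end{itemize}

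What does work is the sharper condition behind the rank-$2$ one. Suppose $X$ comes from $\wh X\in\wh\L$. For $j\le 2$, the $(i,j)$ entry of $\D_rX-XJ_3$ equals $\la e_{r+i-1},(D\wh X-\wh XS)e_{r+j-1}\ra$. Since $\rank(D\wh X-\wh XS)\le 1$, the first two columns of $\D_rX-XJ_3$ have rank at most $1$. Here $R_1=(b_1z_1-z_2,\,b_1z_2-w_1,\,b_1w_1)$, and its first two entries together with $xw_1-w_2$ and $yw_1-w_3$ can be prescribed independently. The displayed formula then forces the first two coordinates of $n$ to vanish. This gives $q'_r=q_r/b_{r+1}$ and $q_r/x_r=-1/b_r$ for every $r$, including $r=1$. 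The matching argument is then needed only for $x_{r+1}=y_r/x_r$.
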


\noindent
{\bf Proof.} For any $r\in\N$, let us consider the subspace $M_{3,r}(\wh\L_r)\in\Lat M_3[\C]$.
We know by Theorem\,\ref{exclusion} that $M_{3,r}(\wh\L_r)$ is $\D_r$-normal of type (T2), with parameters $x_r, y_r, q_r\in\C_0,\; q'_r\in\C$.
Recall that $\D_r= \hbox{diag}(b_r,b_{r+1}, b_{r+2})\in M_3[\C]$.
The general element in $M_{3,r}(\wh\L_r)$ is
\[\left[\begin{matrix}
z_1&z_2&w_1\\
z_1x_r+z_2b_{\D_r}(3)x_r+w_1q'_r+p_r& z_2x_r + w_1q_r+w_2(b_{\D_r}(3)-q_r/x_r)& w_2\\
z_1y_r+z_2b_{\D_r}(2)y_r + w_1 s_r+w_3 t_r& z_2y_r+w_1q_ry_r/x_r+u_r& w_3
\end{matrix}\right],\]
where
\[p_r=w_2\left(b_{\D_r}(3)/b_{r+1}-q'_r/x_r\right),\]
\[s_r=q'_ry_r/x_r + b_{\D_r}(1) q_r y_r/x_r, \quad t_r= b_{\D_r}(2)/b_{r+2} -b_{\D_r}(1)q_r/x_r - q'_r/x_r,\]
\[u_r=w_3(b_{\D_r}(2)-q_r/x_r),\]
and $(z_1,z_2,w_1,w_2,w_3)\in\C^5$.
Then
\[\M_r:= P_{1,1}(M_{3,r}(\wh\L_r))=\left\{ X(z_2,w_1,w_2,w_3): (z_2,w_1,w_2,w_3)\in\C^4\right\},\]
where
\[X(z_2,w_1,w_2,w_3)= \left[\begin{matrix}
z_2x_r +w_1q_r + w_2(b_{\D_r}(3)-q_r/x_r) & w_2\\
z_2y_r+w_1q_ry_r/x_r+w_3(b_{\D_r}(2)-q_r/x_r)& w_3\end{matrix}\right];\]
and
\[\wt\M_{r+1}:= P_{3,3}(M_{3,r+1}(\wh\L_{r+1}))= \left\{\wt X(\wt z_1, \wt z_2, \wt w_1, \wt w_2): (\wt z_1, \wt z_2, \wt w_1, \wt w_2)\in\C^4\right\},\]
where
\[\wt X(\wt z_1,\wt z_2, \wt w_1, \wt w_2)= \left[\begin{matrix}\wt z_1& \wt z_2\\
\wt\x_{2,1}& \wt\x_{2,2}
 \end{matrix}\right],\]
with
\[\wt\x_{2,1}= \wt z_1x_{r+1}+ \wt z_2 b_{\D_{r+1}}(3)x_{r+1} + \wt w_1 q'_{r+1} + \wt w_2(b_{\D_{r+1}}(3)/b_{r+2}-q'_{r+1}/x_{r+1}),\]
\[\wt \x_{2,2}=\wt z_2 x_{r+1}+ \wt w_1 q_{r+1} +\wt w_2 (b_{\D_{r+1}}(3) -q_{r+1}/x_{r+1}).\]
The system
\[X(1,0,0,0)= \left[\begin{matrix}
x_r&0\\
y_r&0\end{matrix}\right], \quad X(0,1,0,0)= \left[\begin{matrix}
q_r&0\\
q_ry_r/x_r&0\end{matrix}\right],\]
\[X(0,0,1,0)= \left[\begin{matrix}
b_{\D_r}(3)-q_r/x_r&1\\
0&0\end{matrix}\right], \; X(0,0,0,1)= \left[\begin{matrix}
0&0\\
b_{\D_r}(2)-q_r/x_r&1\end{matrix}\right]\]
is generating in $\M_r$.
Since $X(0,1,0,0)= \frac{q_r}{x_r} X(1,0,0,0)$, it follows that
\[\left(X(1,0,0,0),\, X(0,0,1,0), \, X(0,0,0,1)\right)\]
is a basis in $\M_r$.
So the general element in $\M_r$ is of the form
\[X(z_2,0,w_2,w_3)= \left[\begin{matrix}
z_2x_r + w_2(b_{\D_r}(3)-q_r/x_r) & w_2\\
z_2y_r + w_3(b_{\D_r}(2)-q_r/x_r)& w_3\end{matrix}\right],\]
where $(z_2,w_2,w_3)\in\C^3$ is arbitrary.

Similarly, the system
\[\wt X(1,0,0,0)= \left[\begin{matrix}1&0\\
x_{r+1}&0\end{matrix}\right], \quad \wt X(0,1,0,0)= \left[\begin{matrix}
0&1\\
b_{\D_{r+1}}(3)x_{r+1}& x_{r+1}\end{matrix}\right],\]
\[\wt X(0,0,1,0)=\left[\begin{matrix}
0&0\\
q'_{r+1}& q_{r+1}\end{matrix}\right],\]
\[ \wt X(0,0,0,1)= \left[\begin{matrix}
0&0\\
b_{\D_{r+1}}(3)/b_{r+2}-q'_{r+1}/x_{r+1}& b_{\D_{r+1}}(3)-q_{r+1}/x_{r+1}\end{matrix}\right]\]
is generating in $\wt\M_{r+1}$.
We know by Lemma\,\ref{partial} that $\M_r= \wt\M_{r+1}$.
Hence $\dim\wt\M_{r+1}=3$, which happens exactly when
\[(\wt X(0,0,1,0),\; \wt X(0,0,0,1))\]
is linearly dependent, that is when
\begin{eqnarray*}
0 & =& \det\left[\begin{matrix}
q'_{r+1}& q_{r+1}\\
b_{\D_{r+1}}(3)/b_{r+2}-q'_{r+1}/x_{r+1}& b_{\D_{r+1}}(3)-q_{r+1}/x_{r+1}\end{matrix}\right]\\
 &=& b_{\D_{r+1}}(3) \left(q'_{r+1} -\frac{q_{r+1}}{b_{r+2}}\right).
\end{eqnarray*}
We infer that
\[q'_{r+1} = \frac{q_{r+1}}{b_{r+2}}\]
must hold.

Since $(\wt X(1,0,0,0),\, \wt X(0,1,0,0),\, \wt X(0,0,1,0))$ is a basis, the general element in $\wt\M_{r+1}$ is of the form
\[\wt X(\wt z_1, \wt z_2, \wt w_1,0)= \left[\begin{matrix}
\wt z_1& \wt z_2\\
\wt z_1 x_{r+1} + \wt z_2 b_{\D_{r+1}}(3) x_{r+1} + \wt w_1 q_{r+1}/b_{r+2}& \wt z_2 x_{r+1}+ \wt w_1 q_{r+1}\end{matrix}\right],\]
where $(\wt z_1, \wt z_2, \wt w_1)\in\C^3$ is arbitrary.

Given any $(z_2,w_2,w_3)\in\C^3$, there exists $(\wt z_1, \wt z_2, \wt w_1)\in\C^3$ such that 
\[X(z_2,0,w_2,w_3)= \wt X(\wt z_1, \wt z_2, \wt w_1, 0).\]
The entrywise equations are
\begin{itemize}
\item[\textup{(1)}] $\;\wt z_1= z_2 x_r + w_2 (b_{\D_r}(3) - q_r/x_r),$
\item[\textup{(2)}] $\; \wt z_2=w_2$,
\item[\textup{(3)}] $\; \wt z_1 x_{r+1} + \wt z_2 b_{\D_{r+1}}(3) x_{r+1} + \wt w_1 q_{r+1}/b_{r+2}= z_2y_r +w_3(b_{\D_r}(2) -q_r/x_r)$,
\item[\textup{(4)}] $\; \wt z_2 x_{r+1} + \wt w_1 q_{r+1}=w_3$.
\end{itemize}
From (3) and (4) we may derive
\begin{itemize}
\item[\textup{(5)}] $\; \wt z_1b_rb_{r+1}x_{r+1} - \wt z_2 b_r x_{r+1} = z_2 b_rb_{r+1}y_r -w_3(b_{r+1}+b_rb_{r+1} q_r/x_r).$
\end{itemize}
Substituting (1) and (2) into (5) we may deduce
\begin{itemize}
\item[\textup{(6)}] $\; z_2b_rb_{r+1}(x_rx_{r+1}-y_r) - w_2 x_{r+1}b_{r+1} (1+ b_rq_r/x_r) + w_3 b_{r+1}(1+ b_r q_r/x_r)=0$.
\end{itemize}
However (6) can hold for every $(z_2,w_2,w_3)\in\C^3$ only if all coefficients are zero:
\[x_{r+1}= \frac{y_r}{x_r} \quad \hbox{ and } \quad \frac{q_r}{x_r}= - \frac{1}{b_r}.\]
Straightforward computations show that under the conditons
\[\frac{q_r}{x_r}= - \frac{1}{b_r} \quad \hbox{ and } \quad q'_r = \frac{q_r}{b_{r+1}},\]
the $\D_r$-normal subspace $M_{3,r}(\wh\L_r)$ is actually strongly $\D_r$-normal with parameters $x_r, y_r, q_r\in\C_0$.

\rightline{$\square$}

\bigskip

L\'aszl\'o K\'erchy

Bolyai Institute, University of Szeged

Aradi v\'ertan\'uk tere 1

Szeged, 6720 Hungary

e-mail: kerchy@math.u-szeged.hu

\end{document}